\documentclass[reqno]{amsart}
\usepackage{amsmath}
\usepackage[dvips]{graphicx}
\usepackage{amsfonts}
\usepackage{amssymb}
\usepackage{latexsym}
\graphicspath{{Img/}}
\setlength{\baselineskip}{18pt}
\newtheorem{theorem}{Theorem}
\theoremstyle{plain}

\newtheorem{corollary}{Corollary}

\newtheorem{lemma}{Lemma}

\newtheorem{problem}{Problem}

\numberwithin{equation}{section}

\begin{document}

\title[D'Alembert-Lagrange principle yields a weighted balancing condition]{D'Alembert-Lagrange principle for point masses yields a system of weighted balancing unit vectors  in the three dimensional Euclidean Space}
\author{Anastasios N. Zachos}
\address{University of Patras, Department of Mathematics, GR-26500 Rion, Greece}
\email{azachos@gmail.com}
\keywords{D'Alembert-Lagrange principle, point masses, weighted Fermat problem, weighted Fermat-Torricelli point, weighted balancing condition} \subjclass[2010]{Primary 70S05;37J06 Secondary 51M16}

\dedicatory{To Lagrange's superposition of equilibria}

\begin{abstract}
In this paper, we prove that D'Alembert-Lagrange principle for point masses using Lagrange-Mach's mechanical construction yields a weighted balancing condition of unit vectors in $\mathbb{E}^{3}.$
\end{abstract}\maketitle

\section{Introduction}

We start by giving D'Alembert-Lagrange principle for a system of points $x_{i}$ with masses $m_{i},$ which move freely in $\mathbb{E}^{3}$  in the configuration manifold $\mathbb{R}^{3n}$ for $i=1,2,\ldots,n$ (\cite[Section~21,pp.~91-97]{Arnold:89},\cite{Lagrange:}).

We note that tangent vectors to the configuration manifold are called virtual variations and the constraint forces are defined by $\textbf{R}_{i}:=m_{i}\ddot{x}_{i}+\frac{\partial U}{\partial x_{i}}.$

We denote by $TM_{x}$ the tangent space of the surface $M$ in the three dimensional space $\{x\}.$

The D'Alembert-Lagrange principle states that:

The sum of the works of the constrained forces on any virtual variation $\xi_{i}\in TM_{x}$ is zero or
$\sum_{i=1}^{n}(m_{i}\ddot{x}_{i}+\frac{\partial U}{\partial x_{i}},\xi)=0.$

We recall the weighted Fermat Problem for $m$ non-collinear and non-coplanar points $A_{i}(x_{i},y_{i},z_{i})$  in $\mathbb{E}^{3}$ (\cite{Sturm:84},\cite{Lindelof:67},\cite{Kup/Mar:97} and \cite{BolMa/So:99}), which was first states in the 17th century by Fermat for three unweighted points (\cite{Fermat:91}):

\begin{problem}[The weighted Fermat problem in $\mathbb{R}^{3}$]\label{WFN}
Find $A_{0}(x,y,z)$ in $\mathbb{R}^{3},$ such that:
\begin{equation}\label{objectivewfrn}
f(\{A_{0}\})=\sum_{i=1}^{m}b_{i}\sqrt{(x-x_{i})^2+(y-y_{i})^2+(z-z_{i})^2}\to min.
\end{equation}
\end{problem}

We need the following fundamental lemma, which deals with the characterization of solutions for the weighted Fermat problem (\cite{Kup/Mar:97},\cite{BolMa/So:99}). The unweighted case was first proved by Lindelof and Sturm in \cite{Lindelof:67} and \cite{Sturm:84}.



\begin{lemma}{\cite[Theorem~18.37,pp.~250]{BolMa/So:99},\cite{Kup/Mar:97}}\label{characteizationFTsolution}
(I) The weighted Fermat point $A_{0}$
exists and is unique.

(II) If for each point  $A_{i}\in \{A_{1},A_{2},...,A_{m}\}$
\[ \|\sum_{j=1,j\ne i}^{m}b_{j}u(A_{j},A_{i})\|>b_i, \]
for $i,j=1,2,\ldots,m,$ then

(a) the weighted Fermat (Fermat-Torricelli) point $A_{0}$ does not belong
to\\ $\{A_{1},A_{2},...,A_{m}\}$

(b) The following weighted balancing condition of unit vectors $u(A_{0},A_{i})$ occurs:

\[ \sum_{i=1}^{m}b_{i}u(A_{0},A_{i})=0\]
(weighted Fermat-Torricelli tree solution).
\end{lemma}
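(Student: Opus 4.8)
The plan is to regard the objective $f$ in \eqref{objectivewfrn} as a convex function on $\mathbb{R}^{3}$ and to read off all three assertions from elementary convex analysis, writing $g_i(A_0):=b_i\|A_0-A_i\|$ so that $f=\sum_{i=1}^{m}g_i$. Since each $g_i$ is continuous and convex, so is $f$; and because $f(A_0)\ge b_1\|A_0-A_1\|\to\infty$ as $\|A_0\|\to\infty$, the function $f$ is coercive and hence attains its infimum on a large closed ball, giving the existence claim in (I). For the uniqueness claim I would upgrade convexity to strict convexity: along any line $A_0(t)=p+tv$ the map $t\mapsto\|p+tv-A_i\|$ has the form $\sqrt{\|v\|^{2}t^{2}+bt+c}$ and is affine in $t$ precisely when the line passes through $A_i$, and strictly convex otherwise; since $A_1,\dots,A_m$ are non-collinear, every line misses at least one $A_i$, so at least one $g_i$ is strictly convex along that line, hence so is $f$. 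A strictly convex coercive function has a unique minimizer, which is the weighted Fermat point $A_0$.

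For part (II)(a) the idea is to rule out each vertex as a minimizer by a one-sided directional-derivative computation. Fix $k\in\{1,\dots,m\}$ and a unit vector $v$; for $t>0$ one has
\[
f(A_k+tv)=b_k\,t+\sum_{j\ne k}b_j\,\|A_k+tv-A_j\|,
\]
and since every summand with $j\ne k$ is smooth at $A_k$, letting $t\to 0^{+}$ gives the right-hand directional derivative
\[
D_vf(A_k)=b_k+\Big\langle\sum_{j\ne k}b_j\,u(A_j,A_k),\,v\Big\rangle ,
\]
where $u(X,Y)$ denotes the unit vector from $X$ toward $Y$. If $A_0=A_k$ held, then minimality of $A_k$ would force $D_vf(A_k)\ge 0$ for every unit $v$; choosing $v$ antiparallel to $\sum_{j\ne k}b_j u(A_j,A_k)$ yields $b_k\ge\big\|\sum_{j\ne k}b_j u(A_j,A_k)\big\|$, contradicting the hypothesis of (II). Hence $A_0\notin\{A_1,\dots,A_m\}$, which is (a).

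Finally, for part (II)(b), since $A_0$ coincides with none of the $A_i$ the function $f$ is differentiable at $A_0$, with
\[
\nabla f(A_0)=\sum_{i=1}^{m}b_i\,\frac{A_0-A_i}{\|A_0-A_i\|}=-\sum_{i=1}^{m}b_i\,u(A_0,A_i),
\]
and the first-order optimality condition $\nabla f(A_0)=0$ is exactly the weighted balancing condition $\sum_{i=1}^{m}b_i\,u(A_0,A_i)=0$. The step I expect to need the most care is the strict-convexity argument in (I), where the non-collinearity hypothesis must be invoked precisely, together with the observation underlying (II)(a) that the singular term $b_k\|A_0-A_k\|$ contributes exactly the constant $b_k$ to $D_vf(A_k)$ in every direction $v$; it is this direction-independence that makes the threshold $b_k$ in the hypothesis sharp and turns the argument into a clean dichotomy between ``$A_0$ is a vertex'' and ``the balancing condition holds.''
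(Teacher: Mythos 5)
Your proposal is correct, but note that the paper itself offers no proof of this lemma: it is quoted verbatim from the cited references (Boltyanski--Martini--Soltan and Kupitz--Martini), and your convex-analysis argument --- coercivity for existence, strict convexity along lines missing some $A_i$ for uniqueness, the one-sided directional derivative $D_vf(A_k)=b_k+\langle\sum_{j\ne k}b_ju(A_j,A_k),v\rangle$ to exclude vertices, and $\nabla f(A_0)=0$ for the balancing condition --- is essentially the standard proof given there. One small imprecision: when the line $p+tv$ passes through $A_i$ at $t=t_0$, the restriction of $g_i$ is $b_i\|v\|\,\abs{t-t_0}$, which is piecewise affine (V-shaped) rather than affine; this does not affect your argument, since you only use that $g_i$ is strictly convex along every line avoiding $A_i$ and that non-collinearity guarantees every line avoids at least one $A_i$.
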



In this paper, we prove that D'Alembert-Lagrange principle for point masses having only one point that moves freely in $E^{3}$ implies a weighted balancing condition of unit vectors in the spirit of weighted Fermat-Torricelli tree solution in $\mathbb{E}^{3}.$

\section{The equivalence of D'Alembert-Lagrange principle and Fermat-Torricelli's weighted balancing condition in two dimensions}
In this section, we generalize Lagrange-Mach's mechanical construction for $m$ unequal weights in the Eucidean plane $\mathbb{E}^{2}$ and on a smooth surface $M$ in $\mathbb{E}^{3}$ and we prove the equivalence of D'Alembert-Lagrange principle and Fermat-Torricelli's weighted balancing condition.

Let $\{A_{1},A_{2},\ldots, A_{m}\}$ be $m$ fixed points and $A_{0}$ be a point in the Euclidean plane $\mathbb{E}^{2}$ or an horizontal plane and let pulleys be placed at these fixed points over which $m$ strings are passed loaded with weights $b_{1},b_{2},\ldots,b_{m},$ respectively and knotted at $A_{0}.$
We set $\alpha_{i0j}\equiv \angle A_{i}A_{0}A_{j}$  and the infinitesimal small distances $\delta \xi_{j}\equiv A_{0}A_{0^{\prime},j},$ such that
$A_{0^{\prime},j}\in [A_{0},A_{j}].$
We denote by $A_{0^{\prime},ij}$ the trace of the orthogonal projection of $A_{0^{\prime},j}$ to the line defined by $A_{0},A_{i}$ and by $A_{i^{\prime}}$ the symmetrical of $A_{i}$ with respect to $A_{0}.$

Furthermore, we set:

$c_{i,j}\equiv \begin{cases} -\cos\alpha_{j0i^{\prime}},& \text{if $A_{0^{\prime},ij}$
lies on the line segment [$A_{0},A_{i^{\prime}}$] },\\
\cos\alpha_{j0i},& \text{if $A_{0^{\prime},ij}$ lies on the line segment [$A_{0},A_{i}$]},
\end{cases}$

where $\alpha_{j0i^{\prime}}=\pi-\alpha_{j0i},$ for $i,i^{\prime},j=1,2,\ldots,m$ and $0<\alpha_{j0i^{\prime}},\alpha_{j0i}<\pi.$

\begin{theorem}\label{DalembertLagrangeMachR2}
D'Alembert Lagrange principle for Lagrange-Mach's mechanical construction of pulleys in $\mathbb{E}^{2}$ implies that:
\begin{equation}\label{sumweightedunitvectors}
\sum_{i=1}^{m}b_{i}u(A_{0},A_{i})=0.
\end{equation}
\end{theorem}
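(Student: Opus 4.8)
The plan is to read the pulley apparatus as a mechanical realization of the weighted Fermat functional and to obtain \eqref{sumweightedunitvectors} as the first–order (virtual work) condition delivered by the D'Alembert--Lagrange principle. First I would identify the constraint forces acting at the knot $A_{0}$: since each string runs frictionlessly over the pulley at $A_{i}$ and carries the hanging weight $b_{i}$ in equilibrium, its tension has magnitude $b_{i}$ and it pulls $A_{0}$ along the direction $u(A_{0},A_{i})$, so the force contributed by string $i$ is $b_{i}u(A_{0},A_{i})$. I work in the case $A_{0}\notin\{A_{1},\dots,A_{m}\}$, so that every unit vector $u(A_{0},A_{i})$ is defined; this is exactly the generic situation of Lemma~\ref{characteizationFTsolution}(II)(a). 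Here the configuration surface $M$ is the whole plane, hence $TM_{x}=\mathbb{E}^{2}$, and since the virtual work depends linearly on the virtual variation it suffices to test the principle on the family of variations already singled out in the construction, namely the displacements that carry $A_{0}$ to $A_{0^{\prime},j}\in[A_{0},A_{j}]$, written $\delta\xi_{j}\,u(A_{0},A_{j})$ for $j=1,\dots,m$.

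The next step is the elementary first–order length computation for the segment $[A_{0},A_{i}]$ under the displacement $\delta\xi_{j}$. Expanding $\abs{A_{0^{\prime},j}A_{i}}^{2}=\abs{\,\abs{A_{0}A_{i}}u(A_{0},A_{i})-\delta\xi_{j}u(A_{0},A_{j})\,}^{2}$ and taking square roots gives $\abs{A_{0^{\prime},j}A_{i}}-\abs{A_{0}A_{i}}=-c_{i,j}\,\delta\xi_{j}+o(\delta\xi_{j})$, where the two branches of the definition of $c_{i,j}$ correspond precisely to the foot $A_{0^{\prime},ij}$ of the perpendicular from $A_{0^{\prime},j}$ to the line $A_{0}A_{i}$ lying on $[A_{0},A_{i}]$ (angle $\alpha_{i0j}$ acute) or on the opposite ray $[A_{0},A_{i^{\prime}}]$ (angle $\alpha_{i0j}$ obtuse), and in both cases $c_{i,j}=\cos\alpha_{i0j}=(u(A_{0},A_{i}),u(A_{0},A_{j}))$. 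Because the total length of string $i$ is fixed, its hanging portion lengthens by $c_{i,j}\delta\xi_{j}$, the weight $b_{i}$ descends by that amount, and the work it produces is $b_{i}c_{i,j}\delta\xi_{j}$; equivalently, the tension $b_{i}u(A_{0},A_{i})$ does work $b_{i}c_{i,j}\delta\xi_{j}$ on $A_{0}$. Summing over $i$, the virtual work of all constraint forces along the $j$-th variation is $\delta W_{j}=\left(\sum_{i=1}^{m}b_{i}c_{i,j}\right)\delta\xi_{j}+o(\delta\xi_{j})$.

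Now I would apply the D'Alembert--Lagrange principle: the total work of the constraint forces on every virtual variation vanishes, so $\delta W_{j}=0$ for each $j$, which after dividing by $\delta\xi_{j}$ and passing to the limit gives $\sum_{i=1}^{m}b_{i}c_{i,j}=0$ for $j=1,\dots,m$. Substituting $c_{i,j}=(u(A_{0},A_{i}),u(A_{0},A_{j}))$, these $m$ scalar identities state that the vector $V:=\sum_{i=1}^{m}b_{i}u(A_{0},A_{i})$ obeys $(V,u(A_{0},A_{j}))=0$ for every $j$. Since the points $A_{1},\dots,A_{m}$ are non-collinear, at least two of the directions $u(A_{0},A_{j})$ are linearly independent and therefore span $\mathbb{E}^{2}$, which forces $V=0$; this is exactly \eqref{sumweightedunitvectors}.

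I expect the main obstacle to be the careful bookkeeping in the second step: pinning down the sign of the first–order length variation in all angular configurations — this is precisely what the piecewise constant $c_{i,j}$ and the auxiliary symmetric point $A_{i^{\prime}}$ are designed to encode — and verifying that the remainder terms are genuinely $o(\delta\xi_{j})$, so that the virtual–work identity becomes exact in the limit. The remaining ingredients (the evaluation of the string tensions, the passage from the $m$ scalar equations to the single vector equation, and the non-degeneracy argument using non-collinearity) are then routine.
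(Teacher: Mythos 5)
Your proposal is correct, and its first half — displacing $A_{0}$ along each direction $u(A_{0},A_{j})$, computing the first--order change of each string length, identifying $c_{i,j}=\cos\alpha_{j0i}=(u(A_{0},A_{i}),u(A_{0},A_{j}))$ in both sign branches, and extracting the $m$ scalar conditions $\sum_{i=1}^{m}b_{i}\cos\alpha_{j0i}=0$ from the vanishing of the virtual work — is exactly the paper's route to equation \eqref{sumcosr2}. Where you diverge is the passage from these $m$ scalar identities to the vector identity \eqref{sumweightedunitvectors}. The paper stays trigonometric: it writes $\alpha_{j0i}=\alpha_{j0k}+\alpha_{k0i}$ for a fixed $k$, expands the cosine of the sum, uses \eqref{sumcosr2} to kill the cosine block, and thereby derives the companion family $\sum_{i=1}^{m}b_{i}\sin\alpha_{j0i}=0$; the pair of cosine and sine conditions then gives the two components of \eqref{sumweightedunitvectors}. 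You instead observe that the conditions say the vector $V=\sum_{i=1}^{m}b_{i}u(A_{0},A_{i})$ is orthogonal to every $u(A_{0},A_{j})$, and that non-collinearity of the $A_{i}$ guarantees two of these directions are linearly independent, hence span $\mathbb{E}^{2}$ and force $V=0$. Your ending is more robust: the paper's angle-addition step tacitly requires oriented (signed) angles for $\alpha_{j0i}=\alpha_{j0k}+\alpha_{k0i}$ to hold, and the cancellation it performs implicitly divides by $\sin\alpha_{j0k}$, i.e.\ needs $A_{j},A_{0},A_{k}$ non-collinear for at least one pair $(j,k)$ — precisely the non-degeneracy you invoke openly through the spanning argument. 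The trade-off is that the paper's computation produces the sine balancing condition \eqref{sumsinr2} explicitly, which has independent geometric interest, whereas your argument reaches \eqref{sumweightedunitvectors} in one linear-algebra step without exhibiting it.
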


\begin{proof}

By displacing the point $A_{0}$ in the directions of $A_{0}A_{j}$ by the infinitesimal small distances $\delta \xi_{j},$ we obtain:

\[\sum_{j=1}^{m}\sum_{i=1}^{m}c_{i,j}b_{i}\delta \xi_{i}=0,\]

which gives

\begin{equation}\label{sumcosr2}
\sum_{i=1}^{m}b_{i} \cos\alpha_{j0i}=0,
\end{equation}

for every $j=1,2,\ldots,m.$

By taking a fixed index $k\in \{1,2,\ldots,m\}$ and the addition of angles $\angle A_{j0k}+\angle A_{k}A_{0}A_{i}$ in $\mathbb{E}^{2},$ we get:  
\[\sum_{i=1}^{m}b_{i}\cos(\alpha_{j0k}+\alpha_{k0i})=0\]

or

\[\sum_{i=1}^{m}b_{i}\cos\alpha_{j0k}\cos\alpha_{k0i}-\sum_{i=1}^{m}b_{i}\sin\alpha_{j0k}\sin\alpha_{k0i}=0,\]

which gives

\[\sum_{i=1}^{m}b_{i}\sin\alpha_{k0i}=0.\]

Thus, for $k=j,$ we get:

\begin{equation}\label{sumsinr2}
\sum_{i=1}^{m}b_{i}\sin\alpha_{j0i}=0
\end{equation}

By taking into account (\ref{sumcosr2}),(\ref{sumsinr2}) we derive (\ref{sumweightedunitvectors}).

\end{proof}

\begin{corollary}{Lagrange-Mach mechanical contsruction \cite[pp.~61-62]{Mach:19}}
For $b_{1}=b_{2}=b_{3}>0$ and $b_{4}=\ldots=b_{m}=0,$ we get
\[\sum_{i=1}^{3}u(A_{0},A_{i})=0.\]
\end{corollary}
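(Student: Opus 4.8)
The plan is to specialize Theorem~\ref{DalembertLagrangeMachR2} by substituting the prescribed weights directly into the weighted balancing condition (\ref{sumweightedunitvectors}), and then to observe that the terms with zero weight simply drop out. First I would recall that Theorem~\ref{DalembertLagrangeMachR2} asserts $\sum_{i=1}^{m}b_{i}u(A_{0},A_{i})=0$ for the Lagrange--Mach pulley construction with arbitrary nonnegative loads $b_{1},\dots,b_{m}$, where the hypothesis that $A_0$ is an interior equilibrium (not coinciding with any $A_i$) is understood. Setting $b_{1}=b_{2}=b_{3}>0$ and $b_{4}=\dots=b_{m}=0$ is an admissible choice of loads: physically, strings with vanishing weights exert no tension, so the knot at $A_0$ is in equilibrium under exactly three equal tensions directed along $A_0A_1$, $A_0A_2$, $A_0A_3$.

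Next I would carry out the substitution: from (\ref{sumweightedunitvectors}) we get
\[
0=\sum_{i=1}^{m}b_{i}u(A_{0},A_{i})=b_{1}\bigl(u(A_{0},A_{1})+u(A_{0},A_{2})+u(A_{0},A_{3})\bigr)+\sum_{i=4}^{m}0\cdot u(A_{0},A_{i}),
\]
and since $b_{1}>0$ we may divide through by the common value $b_{1}$ to obtain $\sum_{i=1}^{3}u(A_{0},A_{i})=0$, which is the claimed identity. This is exactly the classical Lagrange--Mach equilibrium for three equal weights, recovering the unweighted Fermat--Torricelli balancing condition as a corollary of the weighted one.

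The only point requiring a word of care --- and the closest thing to an obstacle --- is justifying that the degenerate weights are covered by the theorem rather than excluded by its hypotheses. In the pulley model the quantities $c_{i,j}$ and the displacements $\delta\xi_i$ are geometric and do not depend on the weights, so equations (\ref{sumcosr2}) and (\ref{sumsinr2}) hold for the choice $b_i\in\{0,b_1\}$ just as for generic positive loads; alternatively, one can note that the three-weight case is literally the original construction of Mach \cite[pp.~61--62]{Mach:19} that Theorem~\ref{DalembertLagrangeMachR2} was designed to generalize, so no new argument is needed. With this remark in place the proof is a one-line specialization, and I would present it as such.
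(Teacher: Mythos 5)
Your proof is correct and matches the paper's (implicit) argument: the corollary is stated immediately after Theorem~\ref{DalembertLagrangeMachR2} with no written proof, precisely because it is the one-line specialization $b_{1}=b_{2}=b_{3}>0$, $b_{4}=\dots=b_{m}=0$ followed by division by $b_{1}$ that you describe. Your added remark that the vanishing weights are admissible (since the geometric quantities $c_{i,j}$ and $\delta\xi_{i}$ do not depend on the loads) is a sensible precaution that the paper does not bother to make explicit.
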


Let $\{A_{1},A_{2},\ldots, A_{m}\}$ be $m$ fixed points and $A_{0}$ be a point on a smooth surface $M$ in $\mathbb{E}^{3}$  and let pulleys be placed at these fixed points over which $m$ strings are passed loaded with weights $b_{1},b_{2},\ldots,b_{m},$ respectively and knotted at $A_{0}.$ Furhtermore, the loose ends are passed through $m$ holes, which correspond to the fixed points $\{A_{1},A_{2},\ldots, A_{m}\}$ and are attached to the physical weights
$b_{1},b_{2},\ldots,b_{m},$ respectively.

We assume the existence of a small positive real number $I,$ such that the  injectivity radius of $M$ $inj(M):=I.$ Therefore, we may consider a bounded and connected region $D$ on $M$, which is subset of a disk $B_{X;I}$ with center $X$ and radius $R=I,$ such that $\{A_{0,}A_{1},A_{2},\ldots, A_{m}\}$ in $D.$
Hence, any geodesic arc that connects $\{A_{0,}A_{1},A_{2},\ldots, A_{m}\}$ with any point $Y$ in $D$ is a shortest arc (segment) (\cite[p.~42]{Berger:03}).
Thus, we define the exponential map at $A_{0}$ $exp_{A_{0}}:W_{A_{0}}\subset T_{A_{0}}(M)\to M$ from the vector $w$ starting at $A_{0}$ travelling along up to the point $ \gamma_{\frac{w}{||w||}}(||w||)$ having length $||w||$ (\cite[pp.~222-223]{Berger:03}).

\begin{theorem}\label{DalembertLagrangeMachM}
D'Alembert Lagrange principle for Lagrange-Mach's mechanical construction of pulleys on a smooth surface $M$ in $\mathbb{E}^{3}$ implies that:
\begin{equation}\label{sumweightedunitvectorssurface}
\sum_{i=1}^{m}b_{i}\frac{exp_{A_{0}}^{-1}(A_{0},A_{i})}{||exp_{A_{0}}^{-1}(A_{0},A_{i})||}=0.
\end{equation}
\end{theorem}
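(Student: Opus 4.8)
The strategy is to transfer Theorem~\ref{DalembertLagrangeMachR2} from the flat plane to the tangent plane $T_{A_{0}}(M)$ via the exponential map, using the fact that geodesics emanating from $A_{0}$ within the region $D$ are length-minimizing. First I would fix the point $A_{0}\in M$ and, for each $i$, let $w_{i}:=\exp_{A_{0}}^{-1}(A_{0},A_{i})\in T_{A_{0}}(M)$ be the initial velocity of the unique minimizing geodesic from $A_{0}$ to $A_{i}$; by the Gauss lemma, $\|w_{i}\|$ equals the geodesic distance $d_{M}(A_{0},A_{i})$, which is precisely the length of the string from the pulley at $A_{i}$ to the knot at $A_{0}$. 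The key observation is that a virtual variation of $A_{0}$ along the surface corresponds, via $d(\exp_{A_{0}})$, to a tangent vector $\xi\in T_{A_{0}}(M)$, and to first order the change in the length $d_{M}(A_{0},A_{i})$ under this variation is $-\langle \xi, w_{i}/\|w_{i}\|\rangle$ — this is the first variation of arclength formula for geodesics, and it is exactly the surface analogue of the infinitesimal displacement computation $c_{i,j}b_{i}\delta\xi_{i}$ in the proof of Theorem~\ref{DalembertLagrangeMachR2}.

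Next I would set up the Lagrange-Mach construction on $M$: the potential energy of the system is $U=\sum_{i=1}^{m}b_{i}\,d_{M}(A_{0},A_{i})$ (each physical weight $b_{i}$ hanging through its hole contributes $b_{i}$ times the length of string on the knot side, up to a constant), and D'Alembert-Lagrange principle asserts that the virtual work of the constraint forces vanishes on every virtual variation $\xi\in T_{A_{0}}(M)$. Since the point $A_{0}$ moves on $M$ with no acceleration term contributing at equilibrium (we are looking at the static/equilibrium condition, as in the planar case), this reduces to $\sum_{i=1}^{m}\langle \tfrac{\partial U}{\partial A_{0}},\xi\rangle=0$ for all $\xi$. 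Plugging in the first-variation formula gives
\[
\sum_{i=1}^{m} b_{i}\,\Big\langle \xi,\ \frac{w_{i}}{\|w_{i}\|}\Big\rangle = 0
\quad\text{for every }\xi\in T_{A_{0}}(M).
\]
Because $T_{A_{0}}(M)$ is a genuine $2$-dimensional vector space and the inner product is nondegenerate, the vanishing of this linear functional for all $\xi$ forces the vector $\sum_{i=1}^{m}b_{i}\,w_{i}/\|w_{i}\|$ itself to be zero in $T_{A_{0}}(M)$, which is exactly \eqref{sumweightedunitvectorssurface}. One can also run the trigonometric argument of Theorem~\ref{DalembertLagrangeMachR2} verbatim inside $T_{A_{0}}(M)$: the angles $\alpha_{j0i}$ are the angles between the vectors $w_{i}$ and $w_{j}$ in the tangent plane, displacement along direction $w_{j}/\|w_{j}\|$ yields $\sum_i b_i\cos\alpha_{j0i}=0$, and the angle-addition identity in the flat tangent plane produces $\sum_i b_i\sin\alpha_{j0i}=0$, whence the balancing condition.

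\textbf{Main obstacle.} The delicate point is justifying that the first-variation formula holds with the \emph{flat} angles of the tangent plane and that no curvature correction enters at first order. This is where the hypotheses on the injectivity radius and the region $D\subset B_{X;I}$ are essential: they guarantee that $\exp_{A_{0}}$ is a diffeomorphism onto a neighborhood containing all the $A_{i}$, that each $w_{i}$ is well defined and unique, and that $d_{M}(A_{0},\cdot)$ is smooth near each $A_{i}$ (no cut-locus issues). Given this, the Gauss lemma supplies $\nabla_{A_{0}} d_{M}(A_{0},A_{i}) = -w_{i}/\|w_{i}\|$, which is a pointwise statement on $M$ with no higher-order curvature term — curvature only affects the \emph{second} variation. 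I would therefore devote the bulk of the write-up to carefully invoking the Gauss lemma and the regularity of the geodesic distance function under the stated radius bound, after which the algebraic conclusion is immediate and mirrors the planar case.
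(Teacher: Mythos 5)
Your proposal is correct and follows the same overall strategy as the paper --- transferring the problem to the tangent plane $T_{A_{0}}(M)$ via $\exp_{A_{0}}^{-1}$ and concluding a balancing of unit vectors there --- but it supplies the substantive justification that the paper's own proof leaves implicit. The paper's argument is a two-sentence sketch: the strings lie along geodesic arcs, one pulls them back by the inverse exponential map, and Theorem~\ref{DalembertLagrangeMachR2} is then ``applied'' in $T_{A_{0}}(M)$. What is not explained there is why the virtual-work computation for the surface construction, in which the string lengths are geodesic distances on $M$, coincides to first order with the flat computation in the tangent plane. Your appeal to the first variation of arclength and the Gauss lemma, giving $\nabla_{A_{0}}d_{M}(A_{0},A_{i})=-w_{i}/\|w_{i}\|$ with no curvature correction at first order, is exactly the missing bridge, and your observation that the injectivity-radius hypothesis is what guarantees uniqueness of the $w_{i}$ and smoothness of the distance functions correctly identifies why that hypothesis appears in the setup. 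Your main derivation is also cleaner than the paper's: once the equilibrium condition reads $\sum_{i}b_{i}\langle\xi,w_{i}/\|w_{i}\|\rangle=0$ for all $\xi\in T_{A_{0}}(M)$, nondegeneracy of the inner product on the two-dimensional tangent plane yields \eqref{sumweightedunitvectorssurface} immediately, with no need for the cosine/sine identities of Theorem~\ref{DalembertLagrangeMachR2} (which, as you note, would also run verbatim in the flat tangent plane). The only quibble is cosmetic: the identity $\|w_{i}\|=d_{M}(A_{0},A_{i})$ is simply the unit-speed parametrization of radial geodesics inside the injectivity radius, not the Gauss lemma itself; the Gauss lemma is what you need for the gradient formula, and you do invoke it correctly at that later step.
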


\begin{proof}
By applying Lagrange-Mach's mechanical construction of pulleys on a smooth surface $M$ in $\mathbb{E}^{3},$ we obtain equilibrium at $A_{0},$ such that
the $m$ strings are attached at the geodesic arcs $A_{i}A_{0}.$ By taking the inverse of the exponential mapping at $A_{0}$ $exp_{A_{0}}^{-1}(A_{i}),$ we may move to $T_{A_{0}}(M).$ Hence, by applying Theorem~\ref{DalembertLagrangeMachR2}, we obtain a system of weighted balancing unit vectors at $T_{A_{0}}(M),$ which gives (\ref{sumweightedunitvectorssurface}).   
\end{proof}

\section{The equivalence of D'Alembert-Lagrange and Fermat-Torricelli's weighted balancing condition in three dimensions}
In this section, we generalize Lagrange-Mach's mechanical construction for $m$ unequal weights in the three dimensional Eucidean Space $\mathbb{E}^{3}$ and we prove the equivalence of D'Alembert-Lagrange principle and Fermat-Torricelli's weighted balancing condition.

Let $\{A_{1},A_{2},\ldots, A_{m}\}$ be $m$ fixed points and $A_{0}$ be a point in $\mathbb{E}^{3}$ and let pulleys be placed at these fixed points over which $m$ strings are passed loaded with weights $b_{1},b_{2},\ldots,b_{m},$ respectively and knotted at $A_{0}.$
We set the infinitesimal small distances $\delta \xi_{j}\equiv A_{0}A_{0^{\prime},j},$ such that
$A_{0^{\prime},j}\in [A_{0},A_{j}].$

We denote by $A_{i^{\prime}}$ the symmetrical point of $A_{i}$ with respect to $A_{0},$ by $A_{k,i0j}$ the trace of the orthogonal projection of $A_{k}$ to the plane defined by $\triangle A_{i}A_{0}A_{j},$ by $A_{k^{\prime},i0j}$ the symmetrical point of $A_{k,i0j}$ with respect to $A_{0},$ by $A_{0^{\prime\prime},(0k^{\prime},i0j)}$ the trace of the orthogonal projection of $A_{0^{\prime},j}$ to the line defined by the segment $A_{0}A_{k^{\prime},i0j}$ and by $A_{0^{\prime\prime\prime},0k^{\prime}}$ the trace of the orthogonal projection of $A_{0^{\prime\prime},(0k^{\prime},i0j)}$
to the line defined by the segment $A_{0}A_{k^{\prime}},$ for $i,j,k,i^{\prime},k^{\prime}=1,2,\ldots,m.$

We set:

$\alpha_{i,j0k}\equiv \angle A_{i}A_{0}A_{i,j0k},$

$\omega_{i,j0k}\equiv \angle A_{i,j0k}A_{0}A_{j},$

$c_{i,j0k}=\begin{cases} -\delta \xi_{k}\cos\omega_{i,j0k}\cos\alpha_{i,j0k},& \text{if $A_{0^{\prime\prime\prime},0i} \notin$ [$A_{0},A_{i}$] },\\
\delta \xi_{k}\cos\omega_{i,j0k}\cos\alpha_{i,j0k},& \text{if $A_{0^{\prime\prime\prime},0i}\in [A_{0},A_{i}]$}.
\end{cases}$


\begin{theorem}\label{DalembertLagrangeMachR3}
D'Alembert Lagrange principle for Lagrange-Mach's mechanical construction of pulleys in $\mathbb{E}^{3}$ implies that:
\begin{equation}\label{sumweightedunitvectorsr3}
\sum_{i=1}^{m}b_{i}u(A_{0},A_{i})=0.
\end{equation}
\end{theorem}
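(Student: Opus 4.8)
The plan is to mirror the two-dimensional argument of Theorem~\ref{DalembertLagrangeMachR2}, but now carrying out the projection of a virtual displacement onto a line through $A_0$ in two stages rather than one, since in $\mathbb{E}^{3}$ the direction $u(A_0,A_j)$ and the direction $u(A_0,A_i)$ need not be coplanar with a convenient reference axis. First I would fix a displacement $\delta\xi_k$ of $A_0$ along $A_0A_k$ and compute the work done by the string force $b_i u(A_0,A_i)$: one resolves $\delta\xi_k\, u(A_0,A_k)$ first into the plane $\triangle A_iA_0A_j$ (picking up a factor $\cos\omega_{i,j0k}$ through the foot $A_{k,i0j}$ of the orthogonal projection) and then within that plane onto the line $A_0A_i$ (picking up a factor $\cos\alpha_{i,j0k}$), the sign being governed by whether the final foot $A_{0^{\prime\prime\prime},0i}$ lands on the ray toward $A_i$ or toward $A_{i^{\prime}}$; this is exactly what the piecewise quantity $c_{i,j0k}$ records. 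Summing the works over all strings and setting the total equal to zero by D'Alembert--Lagrange gives, for each $k$, a relation of the form $\sum_{i=1}^{m} b_i \cos\omega_{i,j0k}\cos\alpha_{i,j0k}=0$, i.e. $\sum_{i=1}^m b_i \langle u(A_0,A_i), u(A_0,A_k)\rangle = 0$ for every $k$.

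Next I would package these $m$ scalar equations linearly. Writing $v:=\sum_{i=1}^m b_i u(A_0,A_i)\in\mathbb{E}^3$, the relations say precisely that $\langle v, u(A_0,A_k)\rangle = 0$ for all $k=1,\dots,m$. Since the points $A_1,\dots,A_m$ are assumed non-collinear and non-coplanar (as in Problem~\ref{WFN}), the unit vectors $u(A_0,A_k)$ span $\mathbb{E}^3$ — indeed at least three of them are linearly independent — so a vector orthogonal to all of them must vanish. Hence $v=0$, which is exactly \eqref{sumweightedunitvectorsr3}. I would also note that, alternatively, one can avoid the spanning hypothesis by imitating the plane proof directly: use the cosine relations together with the spherical law of cosines to extract the sine relations, and then reconstruct each Cartesian component of $v$ in a coordinate frame adapted to $A_0$.

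The main obstacle I expect is bookkeeping the sign conventions and the geometry of the iterated projection correctly: verifying that the double projection (into the plane $\triangle A_iA_0A_j$, then onto the line $A_0A_i$) really produces the coefficient $\cos\omega_{i,j0k}\cos\alpha_{i,j0k}$ with the sign dictated by the position of $A_{0^{\prime\prime\prime},0i}$, and confirming that this coefficient equals the inner product $\langle u(A_0,A_i),u(A_0,A_k)\rangle$ independently of the auxiliary index $j$ chosen to host the plane. Once that identification is secured, the passage from the scalar equations to the vector identity is immediate from the non-coplanarity hypothesis. A secondary point to handle with care is the degenerate case in which some $\delta\xi_k$-direction lies in the plane $\triangle A_iA_0A_j$ or along $A_0A_i$, where one of the feet of projection coincides with $A_0$; these are limiting cases of the two branches of the definition of $c_{i,j0k}$ and do not affect the conclusion, but should be acknowledged.

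\begin{proof}
By displacing the point $A_0$ in the direction of $A_0A_k$ by the infinitesimal distance $\delta\xi_k$, the work of the force $b_i u(A_0,A_i)$ is obtained by projecting the displacement first orthogonally onto the plane $\triangle A_iA_0A_j$, through the foot $A_{k,i0j}$, and then orthogonally onto the line $A_0A_i$, through the foot $A_{0^{\prime\prime\prime},0i}$; this yields the contribution $c_{i,j0k} b_i$, with the sign as in the definition of $c_{i,j0k}$ according to whether $A_{0^{\prime\prime\prime},0i}\in[A_0,A_i]$ or $A_{0^{\prime\prime\prime},0i}\notin[A_0,A_i]$. Summing over all strings, D'Alembert--Lagrange principle gives
\[
\sum_{i=1}^{m} c_{i,j0k} b_i = 0,
\]
hence, dividing by $\delta\xi_k>0$,
\begin{equation}\label{sumcosr3}
\sum_{i=1}^{m} b_i \cos\omega_{i,j0k}\cos\alpha_{i,j0k} = 0
\end{equation}
for every $k=1,2,\ldots,m$. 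Geometrically, $\cos\omega_{i,j0k}\cos\alpha_{i,j0k}=\langle u(A_0,A_i), u(A_0,A_k)\rangle$, so \eqref{sumcosr3} reads
\[
\Big\langle \sum_{i=1}^{m} b_i u(A_0,A_i),\ u(A_0,A_k)\Big\rangle = 0, \qquad k=1,2,\ldots,m.
\]
Since $A_1,A_2,\ldots,A_m$ are non-collinear and non-coplanar, the unit vectors $u(A_0,A_1),\ldots,u(A_0,A_m)$ span $\mathbb{E}^3$; a vector orthogonal to a spanning set is zero. Therefore
\[
\sum_{i=1}^{m} b_i u(A_0,A_i) = 0,
\]
which is \eqref{sumweightedunitvectorsr3}.
\end{proof}
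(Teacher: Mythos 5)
Your proof is correct, and its first half coincides with the paper's: you resolve the virtual displacement by the same double orthogonal projection (into the plane of $\triangle A_{i}A_{0}A_{j}$, then onto the line $A_{0}A_{i}$), invoke D'Alembert--Lagrange to obtain $\sum_{i}b_{i}\cos\omega_{i,j0k}\cos\alpha_{i,j0k}=0$ for each direction of displacement, and identify this coefficient as an inner product of unit vectors, exactly as in (\ref{sumcosr3bis}) and (\ref{sumcosanglesr3}). Where you genuinely diverge is the passage from these scalar relations to the vector identity. The paper fixes an adapted spherical frame with $u(A_{0},A_{1})=(1,0,0)$ and $u(A_{0},A_{2})$ in the $xy$-plane, expands the inner products with $u(A_{0},A_{2})$ and $u(A_{0},A_{3})$, and eliminates successively to produce the three Cartesian component equations (\ref{sumcosr3bis}), (\ref{sumcossin102r3}), (\ref{sumsin102r3}); this tacitly divides by $\sin\alpha_{102}$ and by $\sin a_{3,102}$, i.e.\ it silently uses that $u(A_{0},A_{1}),u(A_{0},A_{2}),u(A_{0},A_{3})$ are linearly independent. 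You replace that entire computation with the observation that $v:=\sum_{i}b_{i}u(A_{0},A_{i})$ is orthogonal to a spanning set and hence vanishes: shorter, and it makes the paper's hidden non-degeneracy hypothesis explicit, at the cost of not exhibiting the three components of $v$ as the paper does. One refinement you could add: the spanning (non-coplanarity) hypothesis is not needed at all, since $v$ lies in the span of the vectors $u(A_{0},A_{k})$ while being orthogonal to each of them, so that
\[
\langle v,v\rangle=\sum_{k=1}^{m}b_{k}\,\langle v,u(A_{0},A_{k})\rangle=0
\]
forces $v=0$ even for degenerate (collinear or coplanar) configurations. The only point to keep straight is the bookkeeping you already flagged: with the paper's conventions $\omega_{i,j0k}$ is measured from $A_{0}A_{j}$, so the coefficient is the inner product with the unit vector along the displacement direction, whichever index labels it; once that is fixed consistently your argument goes through verbatim.
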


\begin{proof}

We consider the following system of unit vectors $u(A_{0},A_{i})$ in $\mathbb{E}^{3}:$

We express the unit vectors $\vec {u}(A_{0},A_{i})$ for $i=1,2,3,4,$ using spherical coordinates:

\begin{equation}\label{spherical1}
u(A_{0},A_{j})=(1,0,0),
\end{equation}

\begin{equation}\label{spherical2}
u(A_{0},A_{k})=(\cos\alpha_{j0k},\sin\alpha_{j0k},0),
\end{equation}

\begin{equation}\label{spherical3}
u((A_{0},A_{i})=(\cos a_{i,j0k} \cos\omega_{i,j0k},\cos a_{i,j0k} \sin\omega_{i,j0k},\sin a_{i,j0k} ),
\end{equation}

$\ldots$

\begin{equation}\label{spherical4}
u(A_{0},A_{m})=(\cos a_{m,j0k} \cos\omega_{m,j0k},\cos a_{m,j0k} \sin\omega_{m,j0k},\sin a_{m,j0k} ).
\end{equation}

for $i,j,k=1,2,\ldots,m, i\neq j\ne k.$

By displacing the point $A_{0}$ in the directions of $A_{0}A_{j}$ by the infinitesimal small distances $\delta \xi_{j},$ we obtain:

\[\sum_{k=1}^{m}\sum_{i=1}^{m}c_{i,j0k}b_{i}\delta \xi_{k}=0,\]

which gives

\begin{equation}\label{sumcosr3}
\sum_{k=1}^{m}\sum_{i=1}^{m}b_{i} \cos\alpha_{i,j0k}\cos\omega_{i,j0k}=0,
\end{equation}

or

\begin{equation}\label{sumcosr3bis}
\sum_{i=1}^{m}b_{i} \cos\alpha_{i,j0k}\cos\omega_{i,j0k}=0,
\end{equation}

for every $j=1,2,\ldots,m.$

By substituting the inner product 
\[u(A_{0},A_{j})\cdot u(A_{0},A_{k})=\cos\alpha_{j0k}, \]
in (\ref{sumcosr3}), we obtain:

\begin{equation}\label{sumcosanglesr3}
\sum_{k=1}^{m}b_{i}\cos\alpha_{j0k}=0.
\end{equation}

Without loss of generality, we fix the indices $j=1, k=2$ and we consider the particular system of unit vectors:

\begin{equation}\label{spherical1102}
u(A_{0},A_{1})=(1,0,0),
\end{equation}

\begin{equation}\label{spherical2102}
u(A_{0},A_{2})=(\cos\alpha_{102},\sin\alpha_{102},0),
\end{equation}

$\ldots$

\begin{equation}\label{spherical3102}
u((A_{0},A_{k})=(\cos a_{k,102} \cos\omega_{k,102},\cos a_{k,102} \sin\omega_{k,102},\sin a_{k,102} ).
\end{equation}

for $k=1,2,\ldots, m.$

By replacing $j=2$ and $j=3$ in (\ref{sumcosanglesr3}),

we get

\begin{equation}\label{sumcosanglesr320k}
\sum_{k=1}^{m}b_{i}\cos\alpha_{20k}=0.
\end{equation}

and

\begin{equation}\label{sumcosanglesr320k2}
\sum_{k=1}^{m}b_{i}\cos\alpha_{30k}=0.
\end{equation}

By substituting the inner product \[u(A_{0},A_{2})\cdot u(A_{0},A_{k}) =\cos\alpha_{102}\cos a_{k,102}\cos\omega_{k,102}+\sin\alpha_{102}\cos a_{k,102} \sin\omega_{k,102}\]
in (\ref{sumcosanglesr320k}) and taking into account (\ref{sumcosr3bis}), we derive:

\begin{equation}\label{sumcossin102r3}
\sum_{k=1}^{m}b_{i} \cos\alpha_{k,102}\sin\omega_{k,102}=0.
\end{equation}

By substituting the inner product 
\[u(A_{0},A_{3})\cdot u(A_{0},A_{k}) =\cos a_{3,102}\cos\omega_{3,102}\cos a_{k,102}\cos\omega_{k,102}+\]\[+\cos a_{3,102} \sin\omega_{3,102}\cos a_{k,102} \sin\omega_{k,102}+\sin\alpha_{3,102}\sin\alpha_{k,102}\]

in (\ref{sumcosanglesr320k2}) and taking into account (\ref{sumcosr3bis}), (\ref{sumcossin102r3}), we derive:

\begin{equation}\label{sumsin102r3}
\sum_{k=1}^{m}b_{i} \sin\alpha_{k,102}=0.
\end{equation}
Hence,  we obtain the following system of three equations (\ref{sumcosr3bis}),(\ref{sumcossin102r3}), (\ref{sumsin102r3}):

\[\sum_{k=1}^{m}b_{i} \cos\alpha_{k,102}\cos\omega_{k,102}=0,\]

\[\sum_{k=1}^{m}b_{i} \cos\alpha_{k,102}\sin\omega_{k,102}=0,\]

\[\sum_{k=1}^{m}b_{i} \sin\alpha_{k,102}=0,\]

which yields (\ref{sumweightedunitvectorsr3}).

\end{proof}


\begin{thebibliography}{99}

\bibitem{Arnold:89} V. I. Arnold,
\emph{Mathematical methods of classical mechanics.} Translated by K. Vogtman and A. Weinstein. (English)
Graduate Texts in Mathematics. \textbf{60}. New York - Heidelberg - Berlin: Springer-Verlag, (1989).

\bibitem{Berger:03} M. Berger,
A panoramic view of Riemannian geometry. 
Berlin: Springer, (2003).


\bibitem{BolMa/So:99} V. Boltyanski, H. Martini, V. Soltan, Geometric Methods and Optimization Problems, Kluwer, Dordrecht/Boston/London (1999).

\bibitem{Fermat:91} P. de. Fermat, Oeuvres de Fermat, vol.~1,Gauthiers-Villars, Paris (1891).



\bibitem{Kup/Mar:97} Y.S. Kupitz and H. Martini, \emph{Geometric aspects of the generalized Fermat-Torricelli problem},
Bolyai Society Mathematical Studies.\textbf{6} (1997) , 55-127.


\bibitem{Lagrange:}J.L.Lagrange,
Mécanique analytique. Tome 1, 2. Édition complète réunissant les notes de la troisième édition revue, corrigée et annotée par Joseph Bertrand et de la quatrième édition publiée sous la direction de Gaston Darboux. (French) 
Paris: Librairie Scientifique et Technique Albert Blanchard, (1965). 

\bibitem{Lindelof:67}L.L.Lindel$\ddot{o}$f, \emph{Sur les maxima et minima d’ une fonction des rayons vecteurs menés d’un point mobile à plusieurs centres fixes.} Acta Soc. Sc. Fenn. \textbf{8} (1867), 191-203.


\bibitem{Mach:19} E. Mach, The science of mechanics; a critical and historical account of its development, Chicago/London The Open Court Publishing (1919).
\bibitem{Sturm:84} R. Sturm, Ueber den Punkt kleinster Entfernungsumme von gegebenen Punkten.
J. Reine Angew. Math. XCVII (1884), 49-62.









\end{thebibliography}
\end{document}